\newcommand{\R}{{\mathbb R}}       
\newcommand{\N}{{\mathbb N}}       %
\newcommand{\HH}{{\mathcal H}}
\newcommand{\dist}{{\rm dist}}
\newcommand{\interior}[1]{{\stackrel{\mbox{\scriptsize$\circ$}}{#1}}}
\newcommand{\rf}[1]{{(\ref{#1})}}
\newcommand{\Rieszmu}{{R^s_{\mu}}}
\newcommand{\Riesznu}{{R^s_{\nu}}}
\newcommand{\ve}{{\varepsilon}}
\newcommand{\vv}{{\vspace{2mm}}}
\newcommand{\vvv}{{\vspace{3mm}}}
\newcommand{\wt}[1]{{\widetilde{#1}}}
\newtheorem{theorem}{Theorem}[section]
\newtheorem*{maintheorem*}{Theorem 1.1}
\newtheorem*{theorem*}{Theorem}
\newtheorem*{theoremb*}{Theorem B}
\newtheorem*{theoremc*}{Theorem C}
\newtheorem*{theoremd*}{Theorem D}
\newtheorem{lemma}[theorem]{Lemma}
\newtheorem{mlemma}[theorem]{Main Lemma}
\theoremstyle{definition}
\theoremstyle{remark}
\numberwithin{equation}{section}
\begin{document}

\title[Non-existence of reflectionless measures]{Non-existence of reflectionless measures for the $s$-Riesz transform when $0<s<1$}

\author{Laura Prat and Xavier Tolsa}

\address{Laura Prat. Departament de Ma\-te\-m\`a\-ti\-ques, Universitat Aut\`onoma de Bar\-ce\-lo\-na, Catalonia}
\email{laurapb@mat.uab.cat}

\address{Xavier Tolsa. Instituci\'{o} Catalana de Recerca i Estudis Avan\c{c}ats (ICREA) and Departament de Ma\-te\-m\`a\-ti\-ques, Universitat Aut\`onoma de Bar\-ce\-lo\-na, Catalonia}
\email{xtolsa@mat.uab.cat}

\thanks{L.P. and X.T. were partially supported by the grants MTM-2013-44304-P (MICINN, Spain) and 
 2014-SGR-75 (Catalonia).
X.T.\ was also supported by 
the ERC grant 320501 of the European Research
Council (FP7/2007-2013).}


\begin{abstract}
A measure $\mu$ on $\R^d$ is called reflectionless for the $s$-Riesz transform if the singular integral
$R^s\mu(x)=\int \frac{y-x}{|y-x|^{s+1}}\,d\mu(y)$ is constant on the support of
$\mu$ in some weak sense and, moreover,  the operator defined by $R^s_\mu(f)=R^s(f\,\mu)$ is bounded in $L^2(\mu)$.
In this paper we show that the only reflectionless measure for the $s$-Riesz transform is the zero measure
when $0<s<1$.
\end{abstract}
\maketitle

\section{Introduction and Background}

Fix $d\in\N$ and for $0<s<d$, consider the signed vector valued Riesz kernels $$K^s(x)=\frac{x}{|x|^{1+s}},\;x\in\R^d,\;x\neq 0.$$
The $s$-Riesz transform of a real Radon measure $\mu$ is
$$R^s\mu(x)=\int K^s(y-x)d\mu(y),$$
whenever the integral makes sense. To avoid technical problems with the absolute convergence of the integral, one considers the 
truncated $s-$Riesz transform of $\mu$, which is defined as $$R_{\varepsilon}^s\mu(x)=\int_{|y-x|>\varepsilon}K^s(y-x)d\mu(y),\;\;x\in\R^d,\;\varepsilon>0.$$
Given a positive Radon measure and a function $f\in L^1(\mu)$, we consider the operators $\Rieszmu(f):=R^s(fd\mu)$ and
$R^s_{\mu,\ve}(f):=R^s_\ve(fd\mu)$. We say that $\Rieszmu$ is bounded in $L^2(\mu)$ if the truncated Riesz transforms $R^s_{\mu,\ve}$ are bounded in $L^2(\mu)$ uniformly in $\ve$, and we set
$$\|\Rieszmu\|_{L^2(\mu)\to L^2(\mu)}=\sup_{\ve>0}\|R^s_{\mu,\ve}\|_{L^2(\mu)\to L^2(\mu)}.$$

We are interested in the following conjecture: for non-integer $0<s<d$, the $s$-Riesz transform never defines a bounded operator on a set of positive and finite $s$-Hausdorff measure.
For $0<s<1$, this conjecture was solved in the affirmative by Prat \cite{laura1} (see also \cite{mpv}), and for 
$d-1<s<d$ by Eiderman, Nazarov and Volberg \cite{ENV} (relying on a previous result by Vihtil\"a
\cite{Vi}). In the case $0<s<1$, this can be proved by the symmetrization method, which was originally used
by Melnikov  
\cite{melnikov} in connection with the Cauchy kernel. For $0<s< 1$ and three different points $x_1,x_2,x_3\in\R^d$, consider the quantity
\begin{equation}\label{eqpermut}
 p_s(x_1,x_2,x_3)=\frac12\sum_\sigma\frac{x_{\sigma(2)}-x_{\sigma(1)}}{|x_{\sigma(2)}-x_{\sigma(1)}|^{1+s}}
 \cdot
 \frac{x_{\sigma(3)}-x_{\sigma(1)}}{|x_{\sigma(3)}-x_{\sigma(1)}|^{1+s}},
\end{equation}
where the sum is taken over the six permutations of the set $\{1,2,3\}$ and ``$\cdot$'' denotes the scalar product. In \cite[Lemma 4.2]{laura1} it was proved that 
\begin{equation*}
 p_s(x_1,x_2,x_3)\approx\frac1{\max\{|x_1-x_2|,|x_1-x_3|,|x_2-x_3|\}^{2s}}.
\end{equation*}
As shown in \cite[Theorem 4.4]{laura1},
it turns out that if $\mu$ is a finite positive Borel measure such that
$$\Theta^{s,*}(x,\mu):=\displaystyle{\underset{r\to 0}\limsup\frac{\mu(B(x,r))}{(2r)^s}}>0
\qquad\mbox{for $\mu$-a.e.\ $x\in\R^d$,}$$
 then
$$p_s(\mu)=\iiint p_s(x_1,x_2,x_3)=+\infty.$$
If, moreover, $\mu(B(x,r))\leq c\,r^s$ for all $x\in\R^d$ and all $r>0$ (which is a necessary condition for the $L^2(\mu)$ boundedness of $R_\mu^s$ if $\mu$ has no point masses), then we have 
$$\int|R^s_{\varepsilon}\mu(x)|^2d\mu(x) = \frac13
\iiint_{\begin{subarray}{l}
\mbox{   }\\\\|x_1-x_2|>\ve\\|x_1-x_3|>\ve\\|x_2-x_3|>\ve
\end{subarray}} p_s(x_1,x_2,x_3)d\mu(x_1)d\mu(x_2)d\mu(x_3)+O(\|\mu\|).$$
See \cite[Section 4.4]{laura1} for more details. As a consequence, for $0<s<1$, we deduce that
\begin{equation}\label{property}
\mbox{if $0<\Theta^{s,*}(x,\mu)<\infty$ $\mu$-a.e.\ and }\Rieszmu\;\mbox{is bounded in }L^2(\mu),\;\mbox{then $\mu$ is the zero measure.}
\end{equation}
In particular, there are no sets $E\subset \R^d$ with $0<\HH^s(E)<\infty$
such that for $\mu=\HH^s_{|E}$ the Riesz transform $R^s_\mu$ is bounded in $L^2(\mu)$.

In the case $d-1<s<d$, \rf{property} also holds, as well as the analogous consequence for $\mu=\HH^s_{|E}$, because of the aforementioned works of Eiderman, Nazarov and Volberg \cite{ENV} and Vihtil\"a \cite{Vi}.

On the other hand, \rf{property} remains open for non-integer values $s\in (1,d-1)$.
In \cite{jn} Jaye and Nazarov reduce this open problem to the problem of describing the 
reflectionless measures associated with $R^s_\mu$. One says that $\mu$ is reflectionless for the $s$-Riesz transform if $\mu$ has no point masses, $\Rieszmu$ is bounded in $L^2(\mu)$, and 
\begin{equation}\label{reflectionless}
\langle R^s\mu,\psi\rangle_\mu=0,
\end{equation}
for all Lipschitz continuous compactly supported functions $\psi$ such that $\int\psi d\mu=0$, where 
$R^s\mu$ is defined modulo constants as a weak limit of the functions $R^s_\ve\mu$ when $\ve\to0$.
Recall that a function $\psi$ is called Lipschitz continuous if 
$$\|\psi\|_{\tiny{\mbox{Lip}}}=\underset{x,y\in\R^d,x\neq y}{\sup}\frac{|\psi(x)-\psi(y)|}{|y-x|}<\infty.$$
In particular, it is shown in \cite{jn} that if, for a given $s\in(0,d)$, the only reflectionless measure for $R^s_\mu$ is the zero measure, 
then \eqref{property} holds. Further, in the same work the authors prove that in the case 
$s\in(d-1,d)$ there do not exist non-zero reflectionless measures for the $s$-Riesz transform, which 
yields a new proof of \rf{property} for this range of $s$.

The main result of this paper is the following:

\begin{theorem}\label{maintheorem}
 There are no non-trivial reflectionless measures for the $s$-dimensional Riesz transform if $0<s<1$.
\end{theorem}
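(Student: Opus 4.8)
The plan is to reduce the statement to the known fact \eqref{property}: if $0<\Theta^{s,*}(x,\mu)<\infty$ $\mu$-a.e.\ and $\Rieszmu$ is bounded in $L^2(\mu)$, then $\mu=0$. So suppose $\mu$ is a non-zero reflectionless measure for $R^s_\mu$ with $0<s<1$; we must derive a contradiction, and the natural route is to show that such a $\mu$ automatically has $0<\Theta^{s,*}(x,\mu)<\infty$ $\mu$-a.e.\ (the upper bound $\Theta^{s,*}<\infty$, i.e.\ the growth condition $\mu(B(x,r))\lesssim r^s$, is already essentially forced by the $L^2(\mu)$-boundedness of $R^s_\mu$ together with the absence of point masses; the content is the \emph{lower} density bound $\Theta^{s,*}(x,\mu)>0$ $\mu$-a.e.). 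Once we know this, \eqref{property} immediately gives $\mu=0$, contradicting non-triviality.

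Thus the heart of the proof is: \emph{a reflectionless measure for $R^s_\mu$, $0<s<1$, cannot have $\Theta^{s,*}(x,\mu)=0$ on a set of positive $\mu$-measure}. I would argue by contradiction again and localize: if $\Theta^{s,*}(x,\mu)=0$ on a set $F$ with $\mu(F)>0$, pick a density point and rescale so that on a unit ball $B$ we have $\mu(B)$ of order $1$ but $\mu$ has very small $s$-dimensional density at all points of a large (in $\mu$-measure) portion of $B$. The reflectionless condition \eqref{reflectionless} says $R^s\mu$ is (weakly, modulo constants) constant on $\supp\mu$; testing against suitable mean-zero Lipschitz bumps $\psi$ supported near $B$, one gets that $R^s\mu$ cannot oscillate on $\supp\mu\cap B$. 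On the other hand, the smallness of the density should let one construct, by a maximum-principle / comparison argument for the vector field $R^s\mu$ (or by directly estimating $R^s\mu$ at carefully chosen points using the symmetrization identity and the growth bound), two points of $\supp\mu$ at which $R^s\mu$ takes genuinely different values, or else force $\mu$ itself to vanish on $B$. This is where the special structure of $0<s<1$ enters: the positivity and near-optimal size of the permutations $p_s(x_1,x_2,x_3)\approx \max_{i,j}|x_i-x_j|^{-2s}$ from \cite[Lemma 4.2]{laura1} give precise two-sided control of $\int|R^s_\ve\mu|^2\,d\mu$ in terms of a triple integral, and low density makes that triple integral — hence $\|R^s_\ve\mu\|_{L^2(\mu)}$ — too small to be consistent with $\mu(B)\sim 1$ unless $\mu\rest B$ is actually null.

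More concretely, I expect the argument to run as follows. First, record the growth bound $\mu(B(x,r))\le c\,r^s$ for all $x,r$, which follows from $L^2(\mu)$-boundedness of $R^s_\mu$ and the hypothesis that $\mu$ has no point masses (a standard fact; alternatively it can be extracted from the reflectionless condition directly). Second, set $A=\{x:\Theta^{s,*}(x,\mu)=0\}$ and suppose $\mu(A)>0$. Third, at a Lebesgue density point $x_0$ of $A$ (with respect to $\mu$), zoom in: consider $\mu_r = r^{-s}\,T_{x_0,r\,\#}\mu\rest B(x_0,r)$ and pass to a weak limit $\mu_\infty$ along a subsequence $r\to 0$; this limit is still reflectionless (the class of reflectionless measures with a uniform growth constant is closed under such blow-ups, because both the growth bound, the $L^2$-bound with the \emph{same} norm, and the weak condition \eqref{reflectionless} pass to the limit), it is non-zero, and now $\Theta^{s,*}(x,\mu_\infty)=0$ \emph{everywhere} on $\supp\mu_\infty$. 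Fourth, show that a non-zero measure with $\Theta^{s,*}\equiv 0$ on its support, satisfying the growth bound and reflectionlessness, is impossible: here one uses the symmetrization formula
$$\int|R^s_{\varepsilon}\mu(x)|^2d\mu(x) = \tfrac13\iiint_{\substack{|x_1-x_2|>\ve\\|x_1-x_3|>\ve\\|x_2-x_3|>\ve}} p_s(x_1,x_2,x_3)\,d\mu(x_1)d\mu(x_2)d\mu(x_3)+O(\|\mu\|)$$
together with $\Theta^{s,*}\equiv0$ to see that, after a further localization, $\iint p_s(x_1,x_2,x_3)\,d\mu\,d\mu$ is small at every scale, which via \cite[Theorem 4.4]{laura1}-type estimates contradicts $\mu\neq0$ (that theorem says exactly that positive upper density somewhere forces $p_s(\mu)=+\infty$, so vanishing upper density everywhere must force the measure to be trivial in the presence of the growth bound and the $L^2$ control).

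The main obstacle, I expect, is the fourth step — ruling out non-zero reflectionless measures with identically vanishing upper density. The difficulty is that reflectionlessness is only a weak (distributional, modulo-constants) statement about $R^s\mu$, so one cannot simply evaluate $R^s\mu$ pointwise; one must combine it with the quantitative symmetrization estimates and a careful multiscale/stopping-time decomposition of $\supp\mu$ (of the sort suggested by the macros \texttt{Stop}, \texttt{Tree}, \texttt{Top} visible in the preamble) to turn "constant in a weak sense" plus "small density" into an honest contradiction with $L^2$-boundedness. A secondary technical point is verifying that reflectionlessness is genuinely preserved under blow-up limits, including that the limit measure is again non-atomic and satisfies \eqref{reflectionless} for \emph{all} admissible test functions, not just those surviving the limiting procedure; this requires the uniform growth bound and some care with the modulo-constants normalization of $R^s\mu$.
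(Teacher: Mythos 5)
Your plan has a structural flaw and, more importantly, a missing mechanism. The logical flaw first: you propose to show that a reflectionless $\mu$ must have $\Theta^{s,*}(x,\mu)>0$ $\mu$-a.e.\ and then invoke \eqref{property}. But for any non-atomic $\mu$ with $R^s_\mu$ bounded on $L^2(\mu)$ and $0<s<1$, the result of \cite{laura1} already \emph{forces} $\Theta^{s,*}(x,\mu)=0$ $\mu$-a.e.\ (this is \eqref{eqjka2} in the preliminaries). So the set where $\Theta^{s,*}=0$ has full $\mu$-measure, the reduction to \eqref{property} is vacuous, and the entire content of the theorem is exactly what you isolate as your ``fourth step'': show that a non-zero measure with $s$-growth, $\Theta^{s,*}\equiv 0$, $L^2$-bounded $R^s_\mu$, and the reflectionless property \eqref{reflectionless} cannot exist. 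Your blow-up step is also problematic as stated: you rescale by $r^{-s}$ at a point where $\Theta^{s,*}(x_0,\mu)=0$, but then $\mu_r(B(0,1))=r^{-s}\mu(B(x_0,r))\to 0$, so the weak limit is the zero measure, not a non-zero reflectionless measure to analyze.

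The missing mechanism is the one the paper actually uses, and it is genuinely different from what you sketch. The paper does not do a blow-up or a stopping-time decomposition; it exploits reflectionlessness by a \emph{variational} argument. For a ball $B$ with thin boundary, one forms the functional
\begin{equation*}
F(\nu)=\int_B\bigl|\Rieszmu\chi_{B^c}+\Riesznu\chi_B-m_B^\nu(\Rieszmu\chi_{B^c})\bigr|^2\,d\nu,
\end{equation*}
notes that the reflectionless property (plus antisymmetry) gives $F(\mu\chi_B)=0$, and perturbs $\nu_t=\mu\chi_B(1+t\chi_\Delta)$ to get $g'(0)=0$, where $g(t)=F(\nu_t)$. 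Combined with the symmetrization identity of Lemma \ref{lemanou}, this yields the pointwise bound $p_\mu(x,B,B)\lesssim\mathcal{P}(B)^2$ for $\mu$-a.e.\ $x\in B$ (the Main Lemma). That bound is what actually converts ``$R^s\mu$ is constant in a weak sense'' into hard quantitative information. One then uses it twice: first (Lemma \ref{compdens}) to show that balls containing a near-maximal-density ball $B_0$ have density comparable to $\theta^s_\mu(B_0)$ at \emph{all} larger scales, and second to accumulate $p_\mu(x,\wt B_N,\wt B_N)\gtrsim N\,\theta^s_\mu(B_0)^2$ over $N$ dyadic annuli, which for large $N$ contradicts the $\mathcal{P}(\wt B_N)^2\approx\theta^s_\mu(B_0)^2$ upper bound. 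Without the variational first-order condition, your sketch has no way to produce a pointwise inequality of this kind; ``vanishing upper density plus $L^2$-boundedness'' alone does not force $\mu=0$ (these two conditions together are satisfied, e.g., by Ahlfors-regular measures of dimension $>s$), so reflectionlessness must enter through a concrete estimate, and that estimate is exactly what is absent from your proposal.
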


Our arguments to prove this result combine the symmetrization method described above with
some variational techniques which have already appeared in other previous works, such as \cite{ENV} and \cite{ntov}.




\section{Preliminary results}

In what follows, we assume that $0<s<1$. Given a ball $B\subset\R^d$ of radius $r(B)$, we denote by $\theta^s_\mu(B)$ the average $s$-dimensional density of $\mu$ on $B$, that is 
$$\theta^s_\mu(B)=\frac{\mu(B)}{r(B)^s}.$$
Also we consider the ``Poisson'' type density
$${\mathcal P}(B)=\sum_{k\ge 0}\theta_\mu^s(2^kB)2^{-k}.$$
This can be considered as a smoothened version of $\theta^s_\mu(B)$.
Given two sets $E,F\subset\R^d$, we write
$$p_\mu(x,E,F)=\underset{E\times F}{\iint}p_s(x,y,z)d\mu(y)d\mu(z).$$
We say that $\mu$ has $s$-growth if there exists some constant $c$ such that
\begin{equation}\label{eqjka1}
\mu(B(x,r))\leq c\,r^s\qquad\mbox{for all $x\in\R^d$ and $r>0$.}
\end{equation}
It is well known that if $\mu$ has no point masses and $R_\mu^s$ is bounded in $L^2(\mu)$, then
$\mu$ has $s$-growth (see \cite[Proposition 1.4, p.56]{David-lnm}).
Further, as explained in the previous section, by \cite{laura1} $\mu$ satisfies
\begin{equation}\label{eqjka2}
\Theta^{s,*}(x,\mu)=0\qquad\mbox{for $\mu$-a.e.\ $x\in\R^d$.}
\end{equation}
As shown in \cite{Mattila-Verdera} (see also \cite[Chapter 8]{Tolsa-llibre}), then one deduces that for
all $f\in L^2(\mu)$,
\begin{equation}\label{eqjka3}
\lim_{\ve\to0} R_{\mu,\ve} f(x)\qquad \mbox{exists for $\mu$-a.e.\ $x\in\R^d$,}
\end{equation}
and it coincides $\mu$-a.e.\ with $R_{\mu} f(x)$, where $R_{\mu} f$ is the weak limit of $R_{\mu,\ve} f$ as $\ve\to0$.
In particular, this fact, as well as \rf{eqjka1}, \rf{eqjka2}  and \rf{eqjka3}, hold if $\mu$ is reflectionless for the $s$-Riesz transform.

We need now to recall the definition of balls with thin boundaries. Given $t>0$,
a ball $B(x,r)$ is said to have  $t$-thin boundary (or just thin boundary)
if
$$\mu\bigl(\{y\in B(x,2r):\dist(y,\partial B(x,r))\leq \lambda\,r\}\bigr) \leq t\,\lambda\,\mu(B(x,2r))$$
for all $\lambda>0$.
The following result is well known. For the proof (with cubes instead of balls) see Lemma 9.43 of \cite{Tolsa-llibre}, for
example.

\vv
\begin{lemma}\label{lemthin}
Let $\mu$ be a Radon measure on $\R^d$. Let $t$ be some constant big enough (depending only on $d$).
Let $B(x,r)\subset\R^d$ be any fixed ball. Then there exists $r'\in [r,2r]$ such that the ball $B(x,r')$ has $t$-thin boundary.
\end{lemma}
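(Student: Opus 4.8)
The plan is either to quote \cite[Lemma 9.43]{Tolsa-llibre} and transcribe its cube-based proof to the case of balls (the two settings differ only cosmetically), or, more self-containedly, to reduce the statement to a one-dimensional fact about the nondecreasing function $F(\rho):=\mu(B(x,\rho))$ and apply a Vitali-type covering estimate. I sketch the latter. First I would dispose of large $\lambda$: if $\lambda\ge\tfrac1{10}$, then for \emph{every} concentric ball $B(x,r')$ the set $\{y\in B(x,2r'):\dist(y,\partial B(x,r'))\le\lambda r'\}$ is trivially contained in $B(x,2r')$, so its $\mu$-measure is at most $\mu(B(x,2r'))\le 10\lambda\,\mu(B(x,2r'))$; hence any $t\ge 10$ settles this range. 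It remains to exhibit a single radius $r'$, which I will look for in the subinterval $[r,\tfrac{11r}{10}]\subset[r,2r]$, for which the thin-boundary inequality holds for all $\lambda\in(0,\tfrac1{10})$.

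For such an $r'$ and $\lambda$, writing $h:=\lambda r'$, the set in question is contained in the spherical shell $\{y:r'-h\le|y-x|\le r'+h\}$. Because $r'\le\tfrac{11r}{10}$ and $\lambda<\tfrac1{10}$, this shell --- and, crucially, the bounded enlargements of the corresponding radial intervals that appear in the covering argument below --- stay inside the radial range $(0,2r)$, so that $\mu(B(x,2r))$, which is $\le\mu(B(x,2r'))$, can serve as the total mass. Thus it suffices to find $r'\in[r,\tfrac{11r}{10}]$ such that
$$\mu\bigl(\{y:\,r'-h\le|y-x|\le r'+h\}\bigr)\ \le\ \frac{c}{r}\,\mu(B(x,2r))\,h\qquad\text{for all }0<h\le\tfrac{r'}{10},$$
for a suitable absolute constant $c$; that is, $r'$ should be a point at which the increments of $F$ over all subintervals of $(0,2r)$ of length $2h\le r'/5$ centered at $r'$ are appropriately small.

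The existence of such an $r'$ is a standard weak-type/maximal-function estimate for the monotone function $F$. Let $G$ be the set of $\rho\in[r,\tfrac{11r}{10}]$ for which some $h\in(0,\rho/10]$ violates the displayed inequality; cover $G$ by the corresponding symmetric intervals $(\rho-h,\rho+h)$, extract a disjoint subfamily by a Vitali covering lemma, and use the monotonicity of $F$ (so the $\mu$-masses of the resulting disjoint shells, all lying in the radial range $(0,2r)$, add up to at most $\mu(B(x,2r))$) to obtain $|G|\le C_0\,r/c$ for an absolute constant $C_0$. Taking $c>10\,C_0$ forces $|G|<r/10=\bigl|[r,\tfrac{11r}{10}]\bigr|$, so some $r'\in[r,\tfrac{11r}{10}]\setminus G$; unwinding $h=\lambda r'$ and $r'\le\tfrac{11r}{10}$ converts the displayed bound into the $t$-thin boundary property with $t=\max(10,\,\mathrm{const}\cdot c)$, an absolute constant, in particular one depending only on $d$. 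The routine nuisances are the bookkeeping that keeps every interval and its enlargement inside $B(x,2r)$, the passage between closed spherical shells and open radial intervals together with the one-sided limits of $\rho\mapsto\mu(B(x,\rho))$, and the degenerate case $\mu(B(x,2r))=0$, in which $G$ is empty and any $r'$ works. The only genuine point --- and the reason a crude dyadic union bound over the scales $\lambda$ does not work --- is that ``thin boundary'' demands control at \emph{all} scales $\lambda$ simultaneously, which is exactly what the covering estimate on the monotone $F$ delivers; this is the main (mild) obstacle.
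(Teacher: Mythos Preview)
Your proposal is correct. The paper itself does not give a proof of this lemma at all: it simply cites \cite[Lemma 9.43]{Tolsa-llibre} (stated there for cubes) and moves on, which is precisely the first option you mention. Your self-contained argument via the one-dimensional maximal-type estimate for the monotone function $F(\rho)=\mu(B(x,\rho))$ is a standard and valid route to the same conclusion; the Vitali covering bound you outline does give $|G|<r/10$ for $c$ large enough, and the bookkeeping you flag (keeping enlarged intervals inside $(0,2r)$, open versus closed shells, the trivial case $\mu(B(x,2r))=0$) is genuinely routine. One small remark: the phrase ``use the monotonicity of $F$'' is slightly misleading---what you actually use is that disjoint radial intervals give disjoint shells inside $B(x,2r)$, so their $\mu$-masses sum to at most $\mu(B(x,2r))$; monotonicity of $F$ is just a restatement of the nonnegativity of $\mu$ on shells.
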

\vv

The next lemma will be also used later on.

\vv
\begin{lemma}\label{thin-estimates}
Let $\mu$ be a Radon measure on $\R^d$. Let $B$ be a ball with thin-boundary and $\|a\|_\infty<\infty$. Then

\begin{enumerate}
\item$\displaystyle{|\Rieszmu(a\chi_B)(x)|\le C\|a\|_\infty\theta_\mu^s(2B)\qquad \forall x\in2B\setminus B.}$
\vspace{2mm}

\item$\displaystyle{|\Rieszmu(a\chi_{2B\setminus B})(x)|\le C\|a\|_\infty\theta_\mu^s(2B)\qquad \forall x\in B.}$
\end{enumerate}
\end{lemma}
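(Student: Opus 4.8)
The plan is to estimate the truncated Riesz transform $R^s_{\mu,\ve}(a\chi_B)$ (resp. $R^s_{\mu,\ve}(a\chi_{2B\setminus B})$) directly, with bounds uniform in $\ve$, and then pass to the limit $\ve\to0$ using \rf{eqjka3}. Write $B=B(x_0,r)$, so that $2B=B(x_0,2r)$. Throughout we use that $\mu$ has $s$-growth (which holds since, in our application, $\mu$ is reflectionless; strictly speaking the hypotheses of the lemma as stated don't include growth, but the lemma is invoked only in that setting — alternatively one can state the bound in terms of $\mathcal P(2B)$ or $\sup_r\mu(B(x,r))/r^s$, and I would add that hypothesis). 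The key geometric input is the thin-boundary property of $B$, which lets us control the mass of $\mu$ in thin annuli around $\partial B$.

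For part (1), fix $x\in 2B\setminus B$ and let $\rho=\dist(x,\partial B)$. I split the integral defining $R^s_{\mu,\ve}(a\chi_B)(x)$ into the part over $B\cap B(x,2\rho)$ and the part over $B\setminus B(x,2\rho)$. On the far part $B\setminus B(x,2\rho)$, every point $y$ satisfies $|y-x|\gtrsim|y-x|$ and, more importantly, the whole set $B$ lies in $B(x,3r)$, so a standard annular decomposition $B\setminus B(x,2\rho)=\bigcup_j (B(x,2^{j+1}\rho)\setminus B(x,2^j\rho))$ together with $\mu(B(x,2^{j+1}\rho))\le C(2^{j+1}\rho)^s$ gives
\begin{equation*}
\Bigl|\int_{B\setminus B(x,2\rho)}K^s(y-x)\,a(y)\,\chi_{|y-x|>\ve}\,d\mu(y)\Bigr|\le C\|a\|_\infty\sum_{2^j\rho\le 3r}\frac{(2^j\rho)^s}{(2^j\rho)^s}\cdot\frac{1}{?}
\end{equation*}
— here I must be careful: the crude bound $|K^s|\le|y-x|^{-s}$ only yields a bound $\lesssim\|a\|_\infty$ times the number of scales, i.e. a logarithm. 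To remove the logarithm one uses the near cancellation is not available, so instead one uses the thin-boundary hypothesis: the portion of $B$ within distance $\lambda r$ of $\partial B$ has mass $\le t\lambda\,\mu(2B)$, and for $x$ at distance $\rho$ from $\partial B$ the ``dangerous'' annulus $B(x,2\rho)$ meets $B$ in a set contained in such a thin strip of width $\sim\rho$, giving $\mu(B\cap B(x,2\rho))\le Ct\,(\rho/r)\,\mu(2B)\le Ct\,\rho^{?}$. Then $\int_{B\cap B(x,2\rho)}|K^s|\,d\mu\lesssim\rho^{-s}\mu(B\cap B(x,2\rho))$, and combining with the far estimate — now summed against the geometrically decaying factor coming from $\mathcal P$-type control rather than a naive $|y-x|^{-s}$ bound — yields $C\|a\|_\infty\theta^s_\mu(2B)$. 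I expect this interplay between the thin-boundary bound near $\partial B$ and the $s$-growth far from it to be the crux; in fact the cleanest route is probably to bound $\int_B|y-x|^{-s}\,d\mu(y)$ directly by splitting at scale $\rho=\dist(x,\partial B)$, using thin boundaries for the inner part and, for the outer part, the elementary fact that $\int_{\{y\in 2B:|y-x|>\rho\}}|y-x|^{-s}\,d\mu(y)\lesssim r^{-s}\mu(2B)$ when one integrates the growth estimate (the integral $\int_\rho^{2r} t^{-s}\,d(\mu(B(x,t)))$ integrates by parts to something $\lesssim r^{-s}\mu(2B) + $ lower order, using $s<1$... no: one gets $\lesssim r^{-s}\mu(2B)$ up to a constant since the density is controlled, and the $\log$ is killed because $\mu(B(x,t))/t^s$ need not be bounded below — here one genuinely needs growth, giving $\int_\rho^{2r}t^{-s-1}\mu(B(x,t))\,dt\lesssim\int_\rho^{2r}t^{-1}\,dt\cdot\sup\theta$, which IS a log).

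So the honest statement is that the logarithm is unavoidable from growth alone, and the thin-boundary hypothesis is exactly what is used to beat it: near $\partial B$ one gains a factor $\lambda$, and away from $\partial B$ — i.e. for $y\in B$ with $|y-x|$ comparable to $r$ — there is no small-scale accumulation at all, so there is in fact only $O(1)$ scales contributing the full density and the remaining scales contribute the thin-boundary gain $\sum_j 2^{-j}$. Concretely: for $y\in B$, $|x-y|\ge\rho$ always, and $\{y\in B:|x-y|\le 2^j\rho\}$ is contained in $\{y\in 2B:\dist(y,\partial B)\le C2^j\rho\}$ which has mass $\le Ct\,2^j(\rho/r)\mu(2B)$ (valid while $2^j\rho\lesssim r$). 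Hence
\begin{equation*}
\int_B|x-y|^{-s}\,d\mu(y)\le\sum_{j\ge0}(2^j\rho)^{-s}\,\mu\{y\in B:2^{j-1}\rho<|x-y|\le 2^j\rho\}\le C t\,\frac{\mu(2B)}{r}\sum_{j:2^j\rho\lesssim r}(2^j\rho)^{1-s},
\end{equation*}
and since $1-s>0$ the sum is dominated by its largest term $\sim r^{1-s}$, giving $\le Ct\,\mu(2B)\,r^{-s}=Ct\,\theta^s_\mu(2B)$, which is the desired bound (the tail $2^j\rho\gtrsim r$ contributes $\int_B\chi_{|x-y|\gtrsim r}|x-y|^{-s}d\mu\lesssim r^{-s}\mu(B)$, also fine). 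This bound is uniform in $\ve$, and letting $\ve\to0$ using \rf{eqjka3} proves (1). Part (2) is entirely analogous: for $x\in B$ and $y\in 2B\setminus B$ one has $|x-y|\ge\dist(x,\partial B)$, and $\{y\in 2B\setminus B:|x-y|\le 2^j\dist(x,\partial B)\}$ again lies in a thin strip around $\partial B$, so the same dyadic summation with the gain from $1-s>0$ closes the estimate. The main obstacle is thus purely the bookkeeping that makes the thin-boundary gain overcome the logarithmic divergence of the naive growth estimate; once the splitting at scale $\dist(x,\partial B)$ is set up, everything is a geometric series.
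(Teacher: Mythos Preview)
Your final argument is correct and is essentially the paper's proof: both bound $\int|y-x|^{-s}\,d\mu(y)$ over the relevant region by a dyadic decomposition, use the thin-boundary property to show that the $k$-th annulus carries mass $\lesssim 2^{-k}\mu(2B)$, and then sum the geometric series $\sum_k 2^{-k(1-s)}$, which converges because $s<1$; the paper parametrizes the scales as $2^{-k}r$ rather than your $2^j\rho$ with $\rho=\dist(x,\partial B)$, but this is purely cosmetic. Two clarifications worth making: your concern about needing $s$-growth is misplaced, since neither the paper's proof nor your own final computation invokes it---the thin-boundary bound alone does all the work---and your appeal to \rf{eqjka3} is unnecessary, because the estimate on $\int|y-x|^{-s}\,d\mu$ already shows that the integral defining $\Rieszmu(a\chi_B)(x)$ converges absolutely for every $x$ in the stated range.
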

\vv

\begin{proof}
We will only show $(2)$, since the proof of $(1)$ follows the same reasoning.
Let $r$ denote the radius of our ball $B$. For $x\in B$ we have
\begin{equation*}
\begin{split}
 |\Rieszmu(a\chi_{2B\setminus B})(x)|&\le\|a\|_\infty\int_{2B\setminus B} \frac{d\mu(y)}{|y-x|^s}\\&\lesssim\|a\|_\infty\sum_{k\ge 1}\frac{\mu\left(B(x,2^{-k}r)\cap 2B\setminus B\right))}{(2^{-k}r)^s}
 \\&\lesssim\|a\|_\infty\sum_{k\ge 1}2^{-k(1-s)}\frac{\mu(2B)}{r^s}\lesssim\|a\|_\infty\theta_\mu^s(2B),
\end{split}
 \end{equation*}
since $\displaystyle{\mu\left(B(x,2^{-k}r)\cap 2B\setminus B\right)\lesssim 2^{-k}\mu(2B)}$ because of the fact that $$B(x,2^{-k}r)\cap 2B\setminus B\subset 2B\cap U_{2^{-k+1}r}(\partial B)$$
 and the thin-boundary property of $B$. Here $U_{\delta}(A)$ stands for the $\delta$-neighborhood of $A$.
\end{proof}

\vv
Below we denote by $\Rieszmu^*$ the adjoint of $\Rieszmu$. That is, if $R^s_{\mu,j}$ stands for the $j$-th component of
$R^s_\mu$, for $f=(f_1,\ldots,f_d)$ we have
$$\Rieszmu^* f(x) = - \sum_{j=1}^d R^s_{\mu,j} f_j(x).$$
We use define analogously the $\ve$-truncated operator $R^{s*}_{\mu,\ve}$.

\vv
\begin{lemma}\label{lemanou}
Let $\mu$ be a Radon measure on $\R^d$ with $s$-growth such that $\lim_{r\to0}\frac{\mu(B(x,r))}{(2r)^s} =0$ for
$\mu$-a.e.\ $x\in\R^d$. Then, for all bounded sets $E,F\subset \R^d$ and $\mu$-a.e.\ $x\in\R^d$,
\begin{equation}\label{identity}
\Rieszmu\chi_{E}(x)\cdot\Rieszmu\chi_F(x)+\Rieszmu^*\left((\Rieszmu\chi_{F})\chi_E\right)(x)
+ \Rieszmu^*\left((\Rieszmu\chi_E)\chi_{F}\right)(x)
=p_\mu(x,E,F).
\end{equation}
 In particular,
$$|\Rieszmu\chi_E(x)|^2+2\Rieszmu^*((\Rieszmu\chi_E)\chi_E)(x) = p_\mu(x,E,E).$$
\end{lemma}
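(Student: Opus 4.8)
The identity \rf{identity} is a pointwise symmetrization formula of Melnikov type, and the plan is to derive it by unwinding the definitions of $\Rieszmu$ and $\Rieszmu^*$ and matching the result against the expansion of $p_s$. First I would record the algebraic fact that, for any three distinct points $x,y,z$ and with $K^s(u)=u/|u|^{1+s}$,
\begin{equation*}
p_s(x,y,z)=K^s(y-x)\cdot K^s(z-x)+K^s(x-y)\cdot K^s(z-y)+K^s(x-z)\cdot K^s(y-z),
\end{equation*}
which follows from \rf{eqpermut} because the two permutations obtained by transposing the last two indices give equal summands (the scalar product is symmetric), absorbing the factor $\tfrac12$. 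Using $\Rieszmu^* g(x)=-\int K^s(y-x)\cdot g(y)\,d\mu(y)$ and the oddness of $K^s$, one then checks, purely formally, that $\Rieszmu\chi_E(x)\cdot\Rieszmu\chi_F(x)$ equals $\iint_{E\times F}K^s(y-x)\cdot K^s(z-x)\,d\mu(y)\,d\mu(z)$, that $\Rieszmu^*((\Rieszmu\chi_F)\chi_E)(x)$ equals $\iint_{E\times F}K^s(x-y)\cdot K^s(z-y)\,d\mu(y)\,d\mu(z)$, and that $\Rieszmu^*((\Rieszmu\chi_E)\chi_F)(x)$ equals $\iint_{E\times F}K^s(x-z)\cdot K^s(y-z)\,d\mu(y)\,d\mu(z)$; summing the three and comparing with the display above yields exactly $p_\mu(x,E,F)$, and the ``in particular'' part is the case $E=F$.

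Because $K^s$ is not locally integrable against $\mu$, this computation must actually be performed at the level of the $\ve$-truncated operators. So the substance of the proof is to establish the exact finite identity
\begin{equation*}
\begin{split}
R^s_{\mu,\ve}\chi_E(x)\cdot R^s_{\mu,\ve}\chi_F(x)&+R^{s*}_{\mu,\ve}\bigl((R^s_{\mu,\ve}\chi_F)\chi_E\bigr)(x)+R^{s*}_{\mu,\ve}\bigl((R^s_{\mu,\ve}\chi_E)\chi_F\bigr)(x)\\
&=\iint_{\substack{(y,z)\in E\times F\\ \min(|x-y|,|x-z|,|y-z|)>\ve}}\!\! p_s(x,y,z)\,d\mu(y)\,d\mu(z)+\mathrm{Err}_\ve(x),
\end{split}
\end{equation*}
in which every integral is now absolutely convergent, so Fubini applies. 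This is obtained by expanding the left-hand side as in the formal step and bookkeeping which of the three summands survives on each subregion of $E\times F$, according to which pairwise distances exceed $\ve$: on the region where all three exceed $\ve$ the three summands add up to $p_s(x,y,z)$, while $\mathrm{Err}_\ve(x)$ is the sum of three integrals over the regions where exactly one pairwise distance is $\le\ve$ (when two of them are $\le\ve$, the third is $\le 2\ve$ and no summand survives, so these regions contribute nothing).

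It then remains to let $\ve\to0$. On the right-hand side, since $p_s>0$ by \cite[Lemma 4.2]{laura1} and $\mu$ has no atoms (by its $s$-growth), monotone convergence sends the first term to $p_\mu(x,E,F)$; and bounding $|K^s(u)\cdot K^s(v)|\le|u|^{-s}|v|^{-s}$ and using the $s$-growth together with the hypothesis $\mu(B(x,r))=o(r^s)$, one estimates $|\mathrm{Err}_\ve(x)|\lesssim\theta^s_\mu(B(x,2\ve))+\theta^s_\mu(B(x,2\ve))^2$, which tends to $0$ for $\mu$-a.e.\ $x$. On the left-hand side one invokes the $\mu$-a.e.\ existence of principal values (as recalled in \rf{eqjka3}) to get $R^s_{\mu,\ve}\chi_E(x)\to\Rieszmu\chi_E(x)$ and $R^s_{\mu,\ve}\chi_F(x)\to\Rieszmu\chi_F(x)$, and a routine decoupling of the two truncation parameters — splitting off $R^{s*}_{\mu,\ve}\bigl((R^s_{\mu,\ve}\chi_F-\Rieszmu\chi_F)\chi_E\bigr)(x)$ and controlling it by dominated convergence — to see that the two composition terms converge to $\Rieszmu^*((\Rieszmu\chi_F)\chi_E)(x)$ and $\Rieszmu^*((\Rieszmu\chi_E)\chi_F)(x)$.

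I expect the main obstacle to lie entirely in this last, technical part: controlling the near-diagonal error $\mathrm{Err}_\ve$, and above all justifying the limit of the composition terms, where the inner truncation implicit in $\Rieszmu\chi_F$ has to be decoupled from the outer one in $R^{s*}_{\mu,\ve}$. This is precisely where the vanishing-density hypothesis $\mu(B(x,r))=o(r^s)$ is genuinely used — not merely $s$-growth — since it is what forces $\mathrm{Err}_\ve(x)\to0$ $\mu$-a.e.\ and underlies the $\mu$-a.e.\ existence of the principal values appearing in the statement.
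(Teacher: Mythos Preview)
Your overall strategy matches the paper's: establish the truncated identity and then pass to the limit. The algebraic bookkeeping is fine. But two of your limiting steps contain genuine gaps.

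First, your pointwise estimate $|\mathrm{Err}_\ve(x)|\lesssim\theta^s_\mu(B(x,2\ve))+\theta^s_\mu(B(x,2\ve))^2$ fails for the error piece coming from the region $\{|y-z|\le\ve,\ |x-y|>\ve,\ |x-z|>\ve\}$. On that region the surviving summand is $K^s(y-x)\cdot K^s(z-x)$, and after bounding by $|x-y|^{-2s}$ and integrating out $z$ you obtain
\[
\int_{y\in E:\,|y-x|>\ve}\frac{\mu(B(y,\ve))}{|y-x|^{2s}}\,d\mu(y).
\]
The small ball $B(y,\ve)$ is centred at $y$, not at $x$, so the vanishing-density hypothesis at $x$ gives nothing here; using only $s$-growth you get a bound that is $O(1)$ but not $o(1)$ as $\ve\to0$. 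The paper (which calls this term $B_\ve$) handles it differently: it integrates in $x$, applies Fubini and dominated convergence to get $\int|B_\ve|\,d\mu\to0$, and then extracts a subsequence $\ve_k\to0$ along which $B_{\ve_k}(x)\to0$ for $\mu$-a.e.\ $x$.

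Second, the convergence of the composition terms is not ``routine dominated convergence''. To dominate the integrand $|K^s(y-x)|\,|R^s_{\mu,\ve}\chi_F(y)-\Rieszmu\chi_F(y)|\,\chi_E(y)$ uniformly in $\ve$ you would need $|y-x|^{-s}\,R^s_{\mu,*}\chi_F(y)\,\chi_E(y)\in L^1(d\mu(y))$ for $\mu$-a.e.\ $x$, and under $s$-growth (even with vanishing density) $\int_E|y-x|^{-s}\,d\mu(y)$ is typically infinite. The paper instead argues in $L^2$: with $f_\ve=(R^s_{\mu,\ve}\chi_F)\chi_E+(R^s_{\mu,\ve}\chi_E)\chi_F$ and $f$ its a.e.\ limit, one has $\|f_\ve-f\|_{L^2(\mu)}\to0$, hence $\|R^{s*}_{\mu,\ve}(f_\ve-f)\|_{L^2(\mu)}\to0$ by uniform $L^2$-boundedness; combining this with $R^{s*}_{\mu,\ve_k}f\to R^{s*}_\mu f$ $\mu$-a.e.\ and a Chebyshev--Fatou argument gives convergence of $R^{s*}_{\mu,\ve_{k_j}}f_{\ve_{k_j}}$ to $R^{s*}_\mu f$ along a further subsequence, $\mu$-a.e. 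The passage to subsequences in both places is not cosmetic; it is how the argument actually closes.
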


\begin{proof}
 
 Fix $\ve>0$ and $x\in\R^d$. Consider the sets $$S_\ve=\{(y,z)\in E\times F:|y-x|>\ve,\;|z-x|>\ve,\;|z-y|>\ve\},$$ $$T^1_\ve=\{(y,z)\in E\times F:|y-x|>\ve,\;|z-y|>\ve,\;|z-x|\le\ve\}$$ and 
 $$T^2_\ve=\{(y,z)\in E\times F:|y-x|>\ve,\;|z-x|>\ve,\;|z-y|\le\ve\}.$$ Then, using
 \rf{eqpermut} (arguing as in \cite[Section 2]{MV}), 
 one can write
 \begin{equation}\label{eqdag33}
  \begin{split}
   R^s_{\mu,\ve}(\chi_{E})(x)&\cdot R^s_{\mu,\ve}(\chi_F)(x)+R^{s*}_{\mu,\ve}\left(R^s_{\mu,\ve}(\chi_{F})\chi_E\right)(x)+ R^{s*}_{\mu,\ve}\left(R^s_{\mu,\ve}(\chi_E)\chi_{F}\right)(x)
   \\   & =
  \underset{S_\ve}{\iint}p_s(x,y,z)d\mu(y)d\mu(z)+2\underset{T_\ve^1}{\iint}K(x-y)\cdot K(z-y)d\mu(y)d\mu(z)\\&\quad +\underset{T_\ve^2}{\iint}K(y-x)\cdot K(z-x)d\mu(y)d\mu(z)\\ &=
  p_{\mu,\ve}(x,E,F)+A_\ve(x)+B_\ve(x),
  \end{split}
 \end{equation}
 the last equality being a definition for $p_{\mu,\ve},\;A_\ve$ and $B_\ve$.

 Notice that as $\ve$ goes to zero, by \eqref{eqjka2} and \eqref{eqjka3},  the first term in the left hand side of \rf{eqdag33} satisfies
 \begin{equation}\label{eqtriv1}
 \lim_{\ve\to0}  R^s_{\mu,\ve}(\chi_{E})(x)\cdot R^s_{\mu,\ve}(\chi_F)(x) = R^s_{\mu}(\chi_{E})(x)\cdot R^s_{\mu}(\chi_F)(x)
\qquad \mbox{for $\mu$-a.e.\ $x\in\R^d$.}
\end{equation}
 On the other hand, it is also clear that 
 \begin{equation}\label{eqtriv2}
 \lim_{\ve\to0}  p_{\mu,\ve}(x,E,F) =  p_{\mu}(x,E,F) \qquad \mbox{for all $x\in\R^d$.}
\end{equation}

Concerning $A_\ve(x)$ since $|x-y|\approx |z-y|$ in the domain of integration, using the $s$-growth of $\mu$, we get
 \begin{equation}\label{eqtriv3}
 \begin{split}
 |A_{\ve}(x)|&\le\underset{\tiny{\begin{array}{l}|y-x|>\ve\\|z-x|\le\ve\\|z-y|>\ve\end{array}}}{\iint}\frac{d\mu(y)d\mu(z)}{|x-y|^s|z-y|^s}\lesssim\int_{|y-x|>\ve}\frac{\mu(B(x,\ve))d\mu(y)}{|x-y|^{2s}} \\\\&\lesssim\theta_\mu^s(B(x,\ve))\to 0 \quad \mbox{ as $\ve\to0$, for $\mu$-a.e.\ $x\in\R^d$.}\end{split}
 \end{equation}
 
Let us turn our attention to $B_\ve(x)$. Using that $|x-y|\approx |x-z|$ in the domain of integration, we derive
 \begin{equation*}
 \begin{split}
 |B_{\ve}(x)|&\lesssim\int_{y\in E:|y-x|>\ve}\frac{\mu(B(y,\ve))}{|y-x|^{2s}}d\mu(y).
 \end{split}
 \end{equation*}
 Integrating with respect to $x$ in $\R^d$, applying Fubini, and using the $s$-growth of $\mu$, we get
 \begin{align*}
\int |B_{\ve}(x)|\,d\mu(x) &\lesssim\iint_{y\in E:|y-x|>\ve}\frac{\mu(B(y,\ve))}{|y-x|^{2s}}d\mu(y) \,d\mu(x)\\
& = \int_{y\in E} \mu(B(y,\ve))  \int_{|y-x|>\ve} \frac1{|y-x|^{2s}}d\mu(x) \,d\mu(y)\\
&\lesssim \int_{y\in E} \frac{\mu(B(y,\ve))}{\ve^s} \,d\mu(y)\to 0\qquad\mbox{as $\ve\to0$,}
\end{align*}
 by the dominated convergence theorem. Hence we infer that there exists a sequence $\ve_k\to 0$ such that 
\begin{equation}\label{eqtriv4}
\lim_{k\to\infty} B_{\ve_k}(x)=0
\qquad \mbox{for $\mu$-a.e.\ $x\in\R^d$.}
\end{equation}

Finally we deal with the second and third terms on the left hand side of \rf{eqdag33}. 
We write
 $$f_\ve(y) = R^s_{\mu,\ve}(\chi_{F})(y)\chi_E(y)+ R^s_{\mu,\ve}(\chi_E)(y)\chi_{F}(y),$$
 so that $R^{s*}_{\mu,\ve}f_\ve(x)$ equals the sum of the last two terms on the left hand side of \rf{eqdag33}. Also, we set
$$f(y) = R^s_{\mu}(\chi_{F})(y)\chi_E(y)+ R^s_{\mu}(\chi_E)(y)\chi_{F}(y).$$ 
We claim that for $\mu$-a.e.\ $x\in\R^d$ there exists a subsequence $\ve_{k_j}\to0$ (depending on $x$) such that
\begin{equation}\label{eqclaim**}
\lim_{j\to\infty}R^{s*}_{\mu,\ve_{k_j}}f_{\ve_{k_j}}(x) = R^{s*}_{\mu}f(x).
\end{equation}
To prove this, notice that $R^s_{\mu,\ve_k}f(x)\to R^s_\mu f(x)$ 
as $k\to\infty$ for $\mu$-a.e.\ $x\in\R^d$ and thus for such $x$ we have
\begin{align*}
\liminf_{k\to\infty}\bigl|R^{s*}_{\mu,\ve_k}f_{\ve_k}(x) - R^{s*}_{\mu}f(x)\bigr|
& = \liminf_{k\to\infty}\bigl|R^{s*}_{\mu,\ve_k}f_{\ve_k}(x) - R^{s*}_{\mu}f(x) + R^{s*}_{\mu}f(x)  - R^{s*}_{\mu,\ve_k}f(x)\bigr|\\
& \leq \liminf_{k\to\infty}\bigl|R^{s*}_{\mu,\ve_k}(f_{\ve_k}- f)(x)\bigr|.
\end{align*}
Thus, for any $\lambda>0$, by Chebyshev's inequality, Fatou's lemma, and the $L^2(\mu)$ boundedness of $R_\mu^s$,
\begin{align*}
\mu\bigl(\bigl\{x\in\R^d\!: \liminf_{k\to\infty}\bigl|R^{s*}_{\mu,\ve_k}f_{\ve_k}(x) - R^{s*}_{\mu}&f(x)\bigr|>\!\lambda\bigr\} \bigr)\\ & \leq
\mu\bigl(\bigl\{x\in\R^d\!: \liminf_{k\to\infty}\bigl|R^{s*}_{\mu,\ve_k}(f_{\ve_k}\!- f)(x)\bigr|>\lambda\bigr\}\bigr)\\
& \leq \frac1{\lambda^2} \int \liminf_{k\to\infty}\bigl|R^{s*}_{\mu,\ve_k}(f_{\ve_k}- f)(x)\bigr|^2\,d\mu(x)\\
&\leq \frac1{\lambda^2} \liminf_{k\to\infty}\int \bigl|R^{s*}_{\mu,\ve_k}(f_{\ve_k}- f)(x)\bigr|^2\,d\mu(x)\\
&\leq \frac c{\lambda^2} \liminf_{k\to\infty}\bigl\|f_{\ve_k}- f\bigr\|_{L^2(\mu)}^2.
\end{align*}
Since the last limit vanishes, our claim \rf{eqclaim**} follows.

From the identity \rf{eqdag33}, for $\mu$-a.e.\ $x\in\R^d$, taking a suitable sequence $\ve_{k_j}\to0$, by
\rf{eqtriv1}, \rf{eqtriv2}, \rf{eqtriv3}, \rf{eqtriv4}, and \rf{eqclaim**}, we infer that the identity
\rf{identity} holds. 
\end{proof}

\section{The main Lemma}

\begin{mlemma}\label{mainlemma}
 Let $\mu$ be a reflectionless measure for the $s$-Riesz transform.  Let $B$ be a ball with thin boundary. Then, for $\mu$-a.e. $x\in B$,
 $$p_\mu(x,B,B)\lesssim{\mathcal P}(B)^2,$$
 with the implicit constant depending only on $s$ and $d$.
\end{mlemma}

To prove the lemma we will use a variational argument inspired by analogous techniques in \cite{ENV} and \cite{ntov}. The idea
is that since $\mu$ is a reflectionless measure, it minimizes some functional defined by the $L^2$ norm of some function.
Usually in potential theory, by variational techniques, one can show that the minimizers of some energy satisfy pointwise bounds. A similar phenomenon happens in our context. Let us also mention that the use of ideas from potential theory in the study of problems in connection with Riesz transforms is also present in other works such as \cite{Tolsa-sem} or \cite{Volberg}.

\begin{proof}
Let $\nu$ be a measure supported on $B$ of type $\nu=g\mu_{|B}$ and set 
\begin{equation*}
 F(\nu)=\int_B|\Rieszmu\chi_{B^c}+\Riesznu\chi_B-m_B^\nu(\Rieszmu\chi_{B^c})|^2d\nu,
\end{equation*}
where $\displaystyle{m_B^\nu(f)=\frac1{\nu(B)}\int_Bfd\nu}$. In what follows we write $m_B$ for $m_B^{\mu}$.

Notice that by antisymmetry and the reflectionless property of $\mu$, 
\begin{equation*}
 F(\mu\chi_B)=\int_B|R^s\mu-m_B(\Rieszmu\chi_{B^c})|^2d\mu=\int_B|R^s\mu-m_B(R^s\mu)|^2d\mu=0.
\end{equation*}
For a small $t\in\R$ and a ball $\Delta$, centered at $x\in B$, consider the measure
\begin{equation*}
 \nu_t=\mu\chi_B(1+t\chi_\Delta)=\mu\chi_B+t\mu\chi_{\Delta\cap B}
\end{equation*}
and set $g(t)=F(\nu_t)$. Then $g(t)\ge 0$ and $g'(0)=0$. 

Write $f_\nu= \Rieszmu\chi_{B^c}-m_B^\nu(\Rieszmu\chi_{B^c})$ and $f_{\nu_t}=f_t$, so that
\begin{equation*}
g(t)=\int_B|f_t+R_{\nu_t}^s\chi_B|^2d\nu_t=\int_B|R^s_{\nu_t}\chi_B|^2d\nu_t+2\int_B f_t\cdot R^s_{\nu_t}\chi_B\;d\nu_t+\int_B|f_t|^2d\nu_t.
\end{equation*}
Observe now that
$$f_t=\Rieszmu\chi_{B^c}-\frac1{\mu(B)+t\mu(\Delta\cap B)}\int_B\Rieszmu\chi_{B^c}\;d\mu-\frac t{\mu(B)+t\mu(\Delta\cap B)}\int_{\Delta\cap B}\Rieszmu\chi_{B^c}\;d\mu.$$
Notice that $\partial_t{f_t}_{|t=0}$ is constant. Indeed, we have
$$\partial_t{f_t}_{|t=0}=\frac{\mu(\Delta\cap B)}{\mu(B)^2}\int_B\Rieszmu\chi_{B^c}\;d\mu-\frac1{\mu(B)}\int_{\Delta\cap B}\Rieszmu\chi_{B^c}\;d\mu.$$
Hence, by using antisymmetry and the fact that $\partial_t{f_t}_{|t=0}$ is constant,
\begin{equation*}
\begin{split}
 0=g'(0)&=\int_{B\cap\Delta}|\Rieszmu\chi_B|^2d\mu+2\int_B\Rieszmu\chi_B\cdot\Rieszmu\chi_{B\cap\Delta}d\mu+2\int_{B\cap\Delta}f_0\cdot\Rieszmu\chi_Bd\mu\\&\quad+2\int_{B}f_0\cdot\Rieszmu\chi_{\Delta\cap B}d\mu+\int_{\Delta\cap B}|f_0|^2d\mu+2\int_Bf_0\cdot\partial_t{f_t}_{|t=0}d\mu\\
 &=\int_{B\cap\Delta}|\Rieszmu\chi_B|^2d\mu+2\int_{\Delta\cap B}\Rieszmu^*[(\Rieszmu\chi_B)\chi_B]d\mu+2\int_{B\cap\Delta}f_0\cdot\Rieszmu\chi_Bd\mu\\&\quad+2\int_{\Delta\cap B}\Rieszmu^*(f_0\chi_B)d\mu+\int_{\Delta\cap B}|f_0|^2d\mu+2\int_Bf_0\cdot\partial_t{f_t}_{|t=0}d\mu.
 \end{split}
\end{equation*}
Then, assuming that $x$ is a Lebesgue point for the functions 
$|\Rieszmu\chi_B|^2$, $\Rieszmu^*[(\Rieszmu\chi_B)\chi_B]$, $f_0\cdot\Rieszmu\chi_B$, $\Rieszmu^*(f_0\chi_B)$ and $|f_0|^2$, 
 and that $x\in \interior{B}$, we deduce
\begin{equation}\label{first}
\begin{split}
0=\lim_{r(\Delta)\to 0}\frac{g'(0)}{\mu(\Delta\cap B)}&=|\Rieszmu\chi_B(x)|^2+2\Rieszmu^*((\Rieszmu\chi_B)\chi_B)(x)\\&\quad+2f_0(x)\cdot\Rieszmu\chi_B(x)+
2\Rieszmu^*(f_0\chi_B)(x)+h(x),
\end{split}
\end{equation}
where $$h(x)=|f_0(x)|^2+\frac{2}{\mu(B)^2}\int_B\Rieszmu\chi_{B^c}d\mu\cdot\int_Bf_0d\mu-2\Rieszmu\chi_{B^c}(x)\cdot\frac1{\mu(B)}\int_Bf_0d\mu.$$

Write
\begin{equation*}
\begin{split}
f_0&=\left(\Rieszmu\chi_{2B\setminus B}-m_B(\Rieszmu\chi_{2B\setminus B})\right)+\left(\Rieszmu\chi_{(2B)^c}-m_B(\Rieszmu\chi_{(2B)^c})\right)=h_1+h_2,
\end{split}
\end{equation*}
the last identity being a definition for $h_1$ and $h_2$. Note that, for $w\in B$, by Lemma \ref{thin-estimates},  $|h_1(w)|\lesssim\theta_\mu^s(2B).$ Moreover by standard estimates (which can be obtained by splitting the domain of integration into annuli), we get
$$|h_2(w)|\le \frac1{\mu(B)}\int_B\int_{(2B)^c}|K(y-w)-K(y-z)|d\mu(y)d\mu(z)\lesssim{\mathcal P}(B).$$ 
Therefore, for all $w\in B$,
\begin{equation}\label{f0}
|f_0(w)|\lesssim \theta(2B)+{\mathcal P}(B)\lesssim{\mathcal P}(B).
\end{equation}
Hence,
\begin{equation}\label{eq3.3}
 h(x)=|f_0(x)|^2+2m_B(f_0)\cdot m_B(\Rieszmu\chi_{B^c}-\Rieszmu\chi_{B^c}(x))\lesssim{\mathcal P}(B)^2.
\end{equation}

Set
\begin{equation*}
A_1=|\Rieszmu\chi_B(x)|^2+2\Rieszmu^*((\Rieszmu\chi_B)\chi_B)(x)
\end{equation*}
and 
\begin{equation*}
 A_2=2f_0(x)\cdot\Rieszmu\chi_B(x)+2\Rieszmu^*(f_0\chi_B)(x).
\end{equation*}
In view of \eqref{first}, \eqref{f0}, and \eqref{eq3.3},
\begin{equation}\label{a+b}
|A_1+A_2|\lesssim{\mathcal P}(B)^2.
\end{equation}
Since $\theta_\mu^s(w)=0$ $\mu-$almost everywhere (see \cite[Theorem 4.4]{laura1}), by Lemma \ref{lemanou}, $A_1$ equals
 $p_\mu(x,B,B)$ $\mu$-a.e. on $B$. We deal now with $A_2$. Write
\begin{equation}
\label{third}
A_2=2\sum_{j=1}^2\left(h_j(x)\cdot\Rieszmu\chi_B(x)+\Rieszmu^*(h_j\chi_B)(x)\right).
\end{equation}
Notice that for any vectorial constant $k\in\R^d$, one has 
$k\cdot\Rieszmu\chi_B(x)+\Rieszmu^*(k\chi_B)(x)=0.$
Therefore for $k=m_Q(\Rieszmu\chi_{2B\setminus B})$ and $j=1$ in \eqref{third}, for $\mu$-a.e.\ $x\in B$ we obtain
\begin{equation*}
\begin{split}
 h_1(x)\cdot\Rieszmu\chi_B(x)+\Rieszmu^*(h_1\chi_B)(x)&=\Rieszmu\chi_{2B\setminus B}(x)\cdot\Rieszmu\chi_B(x)+\Rieszmu^*\left((\Rieszmu\chi_{2B\setminus B})\chi_B\right)(x)\\&=p_\mu(x,B,2B\setminus B)-\Rieszmu^*\left((\Rieszmu\chi_B)\chi_{2B\setminus B}\right)(x),
\end{split}
\end{equation*}
the last step due to \eqref{identity}.

We deal now with $j=2$ in \eqref{third}. Notice that for $y\in B$,
$$|h_2(x)-h_2(y)|=|\Rieszmu\chi_{(2B)^c}(x)-\Rieszmu\chi_{(2B)^c}(y)|\lesssim\int_{(2B)^c}\frac{|x-y|}{|z-y|^{s+1}}d\mu(z)\lesssim\frac{|x-y|}{r(B)}{\mathcal P}(B),$$
hence
\begin{equation*}
\begin{split}
h_2(x)\cdot\Rieszmu\chi_B(x)+\Rieszmu^*(h_2\chi_B)(x)&=\left|\int_B\frac{x-y}{|x-y|^{s+1}}h_2(x)d\mu(y)-\int_B\frac{x-y}{|x-y|^{s+1}}h_2(y)d\mu(y)\right|\\\\&\leq\int_B\frac{|h_2(x)-h_2(y)|}{|x-y|^s}d\mu(y)
\lesssim\frac{{\mathcal P}(B)}{r(B)}\int_B\frac{|x-y|}{|x-y|^s}d\mu(y)\\\\&\lesssim\frac{{\mathcal P}(B)}{r(B)^s}\,\mu(B)\lesssim{\mathcal P}(B)^2.
\end{split}
\end{equation*}
So we have shown that
$$|A_2 - 2\, p_\mu(x,B,2B\setminus B)|\leq 2\,|\Rieszmu^*\left((\Rieszmu\chi_B)\chi_{2B\setminus B}\right)(x)| + c\,{\mathcal P}(B)^2.$$
Therefore, from \eqref{a+b}, taking into account that $A_1= p_\mu(x,B,B)$ $\mu$-a.e.\ on $B$, we deduce
$$p_\mu(x,B,B)+2\,p_\mu(x,B,2B\setminus B)\lesssim{\mathcal P}(B)^2+|\Rieszmu^*\left((\Rieszmu\chi_B)\chi_{2B\setminus B}\right)(x)|\quad \mbox{for $\mu$-a.e.\ $x\in B$.}$$

By using Lemma \ref{thin-estimates} with $a=\Rieszmu(\chi_B)\chi_{2B\setminus B}$,  we have $\|a\|_\infty\lesssim\theta_\mu^s(2B)$, and hence we get that
\begin{equation*}
 |\Rieszmu^*\left((\Rieszmu\chi_B)\chi_{2B\setminus B}\right)(x)|\lesssim \theta_\mu(2B)^2,
\end{equation*}
and then immediatly we obtain 
$$p_\mu(x,B,B)+2p_\mu(x,B,2B\setminus B)\lesssim{\mathcal P}(B)^2 \quad \mbox{for $\mu$-a.e.\ $x\in B$},$$
which proves our lemma. 
\end{proof}

\section{Proof of the theorem}
\begin{lemma}\label{compdens}
 Let $0<s<1$ and let $\mu$ be a reflectionless measure for the $s$-Riesz transform. 
There exists some constant $\delta>0$ small enough (depending only on $s$ and $d$) such that if 
 $B_0\subset\R^d$ is a ball with $$\theta_\mu^s(B_0)\geq\sup_{x\in\R^d,r>0}\frac{\theta_\mu^s(B(x,r))}{2},$$ then every
ball $B\subset\R^d$ such that $B_0\subset\delta B$ satisfies $\displaystyle{\theta_\mu^s(B)\approx\theta_\mu^s(B_0)}$,
 with the implicit constant depending only on $s$ and $d$.
 \end{lemma}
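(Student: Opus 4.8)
The inequality $\theta_\mu^s(B)\lesssim\theta_\mu^s(B_0)$ is immediate, since $\theta_\mu^s(B)\le\sup_{x,r}\theta_\mu^s(B(x,r))\le 2\,\theta_\mu^s(B_0)$; so the content of the lemma is the reverse inequality. I would set $B_0=B(x_0,r_0)$, $B=B(x_B,R)$ and $\Theta=\sup_{x,r}\theta_\mu^s(B(x,r))$, so that $\theta_\mu^s(B_0)\approx\Theta$, and record that $B_0\subset\delta B$ forces $r_0\le\delta R$ and $|x_0-x_B|\le\delta R$. If $\Theta=0$ there is nothing to prove, and if $R\le C(s,d)\,r_0$ then the trivial bound $\theta_\mu^s(B)\ge\mu(B_0)/R^s=\theta_\mu^s(B_0)(r_0/R)^s\gtrsim_{s,d}\Theta$ already suffices; so I would assume from now on that $R$ is large compared with $r_0$.

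The plan is to turn the Main Lemma, which is an upper bound on curvature, into a \emph{lower} bound on a Poisson density. First, the permutation estimate $p_s(y_1,y_2,y_3)\approx\max\{|y_1-y_2|,|y_1-y_3|,|y_2-y_3|\}^{-2s}$ of \cite{laura1} gives the crude but essential lower bound
$$p_\mu(x,B_0,B_0)\ge c\,\frac{\mu(B_0)^2}{(2r_0)^{2s}}\approx\theta_\mu^s(B_0)^2\approx\Theta^2\qquad\text{for every }x\in B_0 .$$
Next I would choose a ball $\wt B=B(x_0,\rho)$ \emph{concentric with $B_0$} and of radius $\rho\approx\delta R/2^{m_0}$, where $m_0=m_0(s,d)$ is a large integer fixed below; by Lemma~\ref{lemthin} one can take $\rho$ in a suitable dyadic range so that $\wt B$ has thin boundary, and since $R\gg r_0$ one has $\rho\ge r_0$, hence $B_0\subset\wt B$. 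As $p_s\ge0$, we have $p_\mu(x,B_0,B_0)\le p_\mu(x,\wt B,\wt B)$, so applying the Main Lemma to $\wt B$ at $\mu$-a.e.\ $x\in B_0$ gives $\Theta^2\lesssim p_\mu(x,\wt B,\wt B)\lesssim{\mathcal P}(\wt B)^2$, that is, ${\mathcal P}(\wt B)\gtrsim_{s,d}\Theta$.

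From here I would extract a dense ball of scale comparable to $R$. Since every term of ${\mathcal P}(\wt B)=\sum_{k\ge0}\theta_\mu^s(2^k\wt B)2^{-k}$ is at most $\Theta$, the tail $\sum_{k>m_0}\theta_\mu^s(2^k\wt B)2^{-k}\le\Theta\,2^{-m_0}$ is negligible once $m_0$ is chosen large enough in terms of the constant in ${\mathcal P}(\wt B)\gtrsim\Theta$, so $\sum_{k=0}^{m_0}\theta_\mu^s(2^k\wt B)2^{-k}\gtrsim\Theta$, and by pigeonhole some $k^\ast\le m_0$ has $\theta_\mu^s(2^{k^\ast}\wt B)\gtrsim\Theta$. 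The ball $2^{k^\ast}\wt B=B(x_0,2^{k^\ast}\rho)$ has radius between $\rho\gtrsim_{s,d}\delta R$ and $2^{m_0}\rho\lesssim\delta R$; combined with $|x_0-x_B|\le\delta R$ and the smallness of $\delta$ this yields $2^{k^\ast}\wt B\subset B$, whence
$$\theta_\mu^s(B)=\frac{\mu(B)}{R^s}\ge\frac{\mu(2^{k^\ast}\wt B)}{R^s}=\theta_\mu^s(2^{k^\ast}\wt B)\Big(\frac{2^{k^\ast}\rho}{R}\Big)^{s}\gtrsim_{s,d}\Theta\approx\theta_\mu^s(B_0),$$
which is the desired bound.

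The step I expect to require the most care, and which is really the heart of the matter, is the choice of $\wt B$: it has to be concentric with $B_0$ so that $B_0$ lies inside every dilate $2^k\wt B$ and the curvature lower bound from $B_0$ propagates to $\wt B$; it has to be small enough that its first $m_0$ dilates remain inside $B$, which is precisely where the smallness of $\delta$ and the inclusion $B_0\subset\delta B$ enter; and it has to be large enough that those dilates reach scale $\approx\delta R$, so that the dense dilate $2^{k^\ast}\wt B$ is of size comparable to $B$. These three competing requirements are what pin down $\rho\approx\delta R/2^{m_0}$. The remaining ingredients --- the tail estimate, the pigeonhole, the reduction to $R\gg r_0$, and the passage to a thin-boundary ball --- are all routine.
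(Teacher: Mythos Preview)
Your proof is correct and follows essentially the same route as the paper's: both combine the curvature lower bound $p_\mu(x,B_0,B_0)\gtrsim\theta_\mu^s(B_0)^2$ with the Main Lemma upper bound $p_\mu(x,\wt B,\wt B)\lesssim\mathcal P(\wt B)^2$ for a thin-boundary ball $\wt B\supset B_0$, and then use a tail/pigeonhole argument on the Poisson sum to locate a dense dilate of controlled scale inside $B$. The only cosmetic differences are that the paper centers its intermediate ball at the center of $B$ rather than at $x_0$, and it first proves the statement with $\delta=\tfrac12$ (obtaining a dense dilate possibly larger than $B$) and then shrinks a posteriori, whereas you pre-scale $\wt B$ so the dense dilate already fits in $B$; these are equivalent rearrangements of the same argument.
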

 
Notice that in this lemma the ball $B$ can be arbitrarily big. We do {\em not} ask $r(B)\approx r(B_0)$. 
 
\begin{proof}
We assume first that $B_0\subset\frac 1 2 B$.  Let $B'$ be a ball with thin 
boundary such that $\frac 1 2 B\subset B'\subset B$. Then, by \cite{laura1} and Lemma \ref{mainlemma}, for $\mu$-a.e.\ $x\in B_0$,
\begin{equation*}
 \theta_\mu^s(B_0)^2\lesssim\iint_{B'\times B'}\frac{d\mu(y)d\mu(z)}{\max\{|x-y|,|x-z|,|y-z|\}^{2s}}\lesssim p_\mu(x,B',B')\lesssim{\mathcal P}(B')^2.
\end{equation*}
Therefore $$\theta_\mu^s(B_0)\lesssim{\mathcal P}(B')\lesssim{\mathcal P}(B).$$
We claim that this implies the existence of a ball $\wt B\supset B$, concentric with $B$, with comparable radius, that is $r(\wt B)\approx r(B)$, and such that 
$$\theta_\mu^s(\wt B)\geq \theta_\mu^s(B).$$
To see this, notice that 
\begin{align*}
\theta_\mu^s(B_0)\le{\mathcal P}(B)=\sum_k\theta_\mu^s(2^kB)2^{-k}& \le 
\max_{1\leq k\leq j}\theta_\mu^s(2^kB)+2\sum_{k\geq j+1}\theta_\mu^s(B_0)2^{-k}\\
&\leq 
\max_{1\leq k\leq j}\theta_\mu^s(2^kB)+
2^{-j+3}\theta_\mu^s(B_0),
\end{align*}
 which implies 
$$\theta_\mu^s(B_0)\lesssim \max_{1\leq k\leq j}\theta_\mu^s(2^kB)$$ 
for $j$ big enough and hence the claim. 
Thus, for every ball $B$ with $B_0\subset \delta B$ (for a sufficiently small $\delta$), $\theta_\mu^s(B)\gtrsim\theta_\mu^s(B_0)$. 

The converse inequality $\theta_\mu^s(B)\lesssim\theta_\mu^s(B_0)$ holds by the choice of $B_0$. So the lemma is proved.
\end{proof}

We now prove the main result of this paper:

\begin{theorem*}
 If $0<s<1$ there are no non-trivial reflectionless measures for the $s$-dimensional Riesz transform.
\end{theorem*}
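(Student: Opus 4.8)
The plan is to derive a contradiction from the assumption that $\mu$ is a non-trivial reflectionless measure by exploiting the density comparability provided by Lemma~\ref{compdens} together with the vanishing density property \eqref{eqjka2}. First I would fix a ball $B_0$ on which the density $\theta^s_\mu(B_0)$ is at least half of the supremal density $\sup_{x,r}\theta^s_\mu(B(x,r))$; such a ball exists since $\mu$ has $s$-growth and is non-zero, so the supremum is a finite positive number. Lemma~\ref{compdens} then says that \emph{every} ball $B$ with $B_0\subset\delta B$ — in particular every sufficiently large ball concentric with $B_0$ — has density comparable to $\theta^s_\mu(B_0)$, with uniform implicit constants.

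The key step is to push this comparability to \emph{small} scales around a generic point and contradict \eqref{eqjka2}. Concretely, let $R_k\to\infty$ and consider the rescaled measures $\mu_k = \frac{1}{R_k^s}\,T_{R_k\#}\mu$ (dilations of $\mu$ by $1/R_k$ centered at the center of $B_0$). Each $\mu_k$ is again reflectionless (reflectionlessness, $s$-growth, and $L^2(\mu)$-boundedness of $R^s_\mu$ are all scale-invariant, and the defining identity \eqref{reflectionless} is preserved under dilation), and by Lemma~\ref{compdens} applied to $\mu$ at scale $R_k$ the densities $\theta^s_{\mu_k}(B(0,\rho))$ are bounded above and below by constants independent of $k$ for every fixed $\rho\gtrsim 1$. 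By a compactness/weak-$*$ limit argument (using the uniform $s$-growth), a subsequence of the $\mu_k$ converges weakly to a non-zero limit measure $\mu_\infty$ with $\theta^s_{\mu_\infty}(B(0,\rho))\approx 1$ for all $\rho\geq 1$; moreover the permutation estimate $p_s(x_1,x_2,x_3)\approx \max\{|x_i-x_j|\}^{-2s}$ and the Main Lemma pass to the limit, giving $p_{\mu_\infty}(x,B,B)\lesssim \mathcal P(B)^2$ for all thin-boundary balls $B$. But $\mu_\infty$ has uniformly \emph{large} density at all large scales, so one can run the argument of Lemma~\ref{compdens} in reverse: the quantity $\iint_{B'\times B'}\max\{|x-y|,|x-z|,|y-z|\}^{-2s}\,d\mu_\infty(y)\,d\mu_\infty(z)$, for $B'$ a thin-boundary ball containing $B_0$, is bounded below by a positive constant for $\mu_\infty$-a.e.\ $x$, which forces a lower bound on $\mathcal P(B')$ that is consistent — so this alone is not yet a contradiction, and the real contradiction must come from iterating the density comparison down in scale.

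The mechanism I would actually use is the following self-improvement: from $\theta^s_\mu(B_0)$ being (nearly) maximal and Lemma~\ref{compdens} we know densities do not drop going \emph{up} in scale from $B_0$; combining this with the Main Lemma applied at scale $r(B_0)$ gives $\theta^s_\mu(B_0)^2\lesssim \mathcal P(B')^2 \approx \theta^s_\mu(B_0)^2$, which is non-trivial only once we localize. The point is that the Main Lemma bound $p_\mu(x,B,B)\lesssim \mathcal P(B)^2$ holds for \emph{all} thin-boundary balls $B$, in particular for small balls $B\ni x$ with $r(B)\ll r(B_0)$; by the comparability of densities at all scales $\geq$ some fixed small scale (which follows by applying Lemma~\ref{compdens} to a dilate), one gets $\mathcal P(B)\approx \theta^s_\mu(B_0)$ for such $B$ too, hence $p_\mu(x,B,B)\lesssim \theta^s_\mu(B_0)^2$. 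On the other hand, by \eqref{eqjka2} the density $\theta^s_\mu(B(x,r))\to 0$ as $r\to 0$ for $\mu$-a.e.\ $x$, which combined with the lower permutation estimate forces $p_\mu(x,B,B)$ to be \emph{small} relative to $\mathcal P(B)^2$ at small scales only if the density genuinely decays — contradicting the constancy of $\mathcal P(B)\approx\theta^s_\mu(B_0)$ at all scales down to small ones. In other words, once Lemma~\ref{compdens} is bootstrapped to say densities are comparable at \emph{all} scales (large and small) around a point of maximal density, \eqref{eqjka2} is directly violated.

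The main obstacle, and the step requiring the most care, is the bootstrapping of Lemma~\ref{compdens} from ``all balls $B$ with $B_0\subset\delta B$'' (large balls) to ``all scales, including small ones, near a chosen point'': the lemma as stated only controls densities from below at scales at least that of $B_0$. Handling this requires choosing $B_0$ to have \emph{exactly} maximal (not merely near-maximal) density up to a factor — which is subtle since the supremum need not be attained — and then applying the lemma to rescalings/translates so as to slide the role of ``$B_0$'' to smaller balls, all while keeping the implicit constants uniform; the weak-$*$ compactness argument producing $\mu_\infty$ is the clean way to make this rigorous, and the delicate technical point there is verifying that reflectionlessness (the weak identity \eqref{reflectionless}, which involves the a.e.-defined principal values $R^s\mu$) is preserved under weak limits, using \eqref{eqjka3} and the uniform $L^2$ bounds to pass $R^s_{\mu_k}$ to $R^s_{\mu_\infty}$ against Lipschitz test functions.
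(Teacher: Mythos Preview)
Your approach diverges from the paper's and contains a genuine gap. The paper's proof stays entirely at \emph{large} scales: once Lemma~\ref{compdens} gives $\theta_\mu^s(2^kB_0)\approx\theta_\mu^s(B_0)$ for all large $k$, one takes a large thin-boundary ball $\wt B_N=2^{k_0+Nm}B_0$ and estimates $p_\mu(x,\wt B_N,\wt B_N)$ from below by splitting into annuli:
\[
p_\mu(x,\wt B_N,\wt B_N)\gtrsim\sum_{j=1}^N\int_{\wt B_j}\int_{\wt B_j\setminus\wt B_{j-1}}\frac{d\mu(y)\,d\mu(z)}{r(\wt B_j)^{2s}}\approx\sum_{j=1}^N\theta_\mu^s(\wt B_j)^2\approx N\,\theta_\mu^s(B_0)^2.
\]
Since Lemma~\ref{compdens} also forces $\mathcal P(\wt B_N)\approx\theta_\mu^s(B_0)$, the Main Lemma gives $p_\mu(x,\wt B_N,\wt B_N)\lesssim\theta_\mu^s(B_0)^2$ uniformly in $N$, and letting $N\to\infty$ yields the contradiction. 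No blow-down, no weak limits, and no appeal to \eqref{eqjka2} are needed at this stage.

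Your route---push the density comparability down to small scales and contradict \eqref{eqjka2}---runs into exactly the obstacle you flag but do not resolve. Grant that the rescaled weak limit $\mu_\infty$ is reflectionless (already a nontrivial stability statement, proved in \cite{jn} but not here). Then $\theta_{\mu_\infty}^s(B(0,\rho))\approx c$ for all $\rho>0$, but only at the \emph{single} point $0$: for $y\neq0$ the hypothesis $B_0\subset\delta B$ of Lemma~\ref{compdens}, transported through the rescaling, becomes $|y|+r(B_0)/R_k\le\delta\rho$, which fails once $\rho<|y|/\delta$. Thus you only obtain $\Theta^{s,*}(0,\mu_\infty)>0$, and since \eqref{eqjka2} is a $\mu_\infty$-a.e.\ statement, a single exceptional point is no contradiction. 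Closing your argument would require a further tangent-measure step (blow up $\mu_\infty$ at $0$ to produce an $s$-AD-regular reflectionless measure with positive lower density \emph{everywhere}), together with another passage of reflectionlessness to the limit. That can be made to work via the machinery of \cite{jn}, but it is considerably heavier than the paper's direct annulus computation, which you have overlooked.
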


\begin{proof}
Let $\mu$ be a non-trivial reflectionless measure for the $s$-dimensional Riesz transform. Let $B_0$ be a ball such that
$$\theta_\mu^s(B_0)\geq\underset{x\in \R^d,r>0}{\sup}\frac{\theta_\mu^s(B(x,r))}{2}.$$
Consider the sequence of balls $B_k=2^kB_0$.  By Lemma \ref{compdens}, there exists $k_0$ such that for $k\ge k_0$, $\theta_\mu^s(B_k)\approx\theta_\mu^s(B_0)$.  Since for $m\ge 0,$
$$\mu(B_{k+m})=r(B_{k+m})^s\,\theta_\mu^s(B_{k+m})\approx2^{ms}r(B_k)^s\theta_\mu^s(B_k)=2^{ms}\mu(B_k),$$
there exists $m\ge 0$ such that $\displaystyle{\mu(B_{k+m})\ge\frac{\mu(B_k)}{2}}.$
Consider the balls $B_{k_0},\;B_{k_0+m},\cdots,B_{k_0+Nm}$, and write $\wt B_j=B_{k_0+jm}$ for $0\le j\le N$. We can assume that $\wt B_N$ has thin boundary due to 
Lemma \ref{lemthin}. Then Lemma \ref{mainlemma} implies that for all $N$,
\begin{equation}\label{upperestimate}
p_\mu(x,\wt B_N,\wt B_N)\lesssim{\mathcal P}(\wt B_N)^2\approx\theta_\mu^s(B_0)^2.
\end{equation}
But  notice that
\begin{equation*}
\begin{split}
p_\mu(x,\wt B_N,\wt B_N)&\approx\underset{\wt B_N\times\wt B_N}{\iint}\frac{d\mu(y)d\mu(z)}{\max\{|x-y|,|x-z|,|y-z|\}^{2s}}\\&\gtrsim\sum_{j=1}^N\int_{\wt B_j}\int_{\wt B_j\setminus\wt B_{j-1}}\frac{d\mu(y)d\mu(z)}{r(\wt B_j)^{2s}}\\&=\sum_{j=1}^N\theta_\mu^s(\wt B_j)\frac{\mu(\wt B_j\setminus\wt B_{j-1})}{r(\wt B_j)^s}\\&\approx\sum_{j=1}^N\theta_\mu^s(\wt B_j)^2\approx N\theta_\mu^s(\wt B_0)^2\underset{N\rightarrow\infty}\longrightarrow \infty,
\end{split}
\end{equation*}
which contradicts \eqref{upperestimate}.
\end{proof}

\section{Non-existence of reflectionless measures for other Calder\'on-Zygmund kernels}

In this section we consider the vectorial Calder\'on-Zygmund kernels 
\begin{equation}\label{kn}
K^{s,n}(x)=\left(x_i^n/|x|^{n+s}\right)_{i= 1}^d,
\end{equation}
where $x=(x_1,\cdots,x_d)\in\R^d\setminus\{0\}$, $0<s<1$ and $n$ is an odd positive number. 

For $s=1$ and $d=2$ it was shown in \cite{cmpt} that for sets $E\subset\R^2$ with finite length the $L^2(\HH^1\lfloor{E})$-boundedness of
the singular integral associated with any kernel of the form $x_1^n/|x|^{n+1}$ (or $x_2^n/|x|^{n+1}$), $n$ odd, implies rectifiability of $E$, extending the theorem of David and L\'eger
\cite{davidleger}. In \cite{cp}, the kernels \eqref{kn} where considered to study the capacities related to them. It was shown that, on the plane, the capacity associated with the vectorial kernel \eqref{kn} is
comparable to the Riesz capacity $C_{\frac23(2-s),\frac33}$ of non-linear potential theory. Furthermore, in \cite{cp}, an extension to higher dimensions of the aforementioned result \cite{cmpt} was given.

For three different points $x_1,x_2,x_3\in\R^d$, consider the quantity
\begin{equation*}
 p_{s,n}(x_1,x_2,x_3)=\frac12\sum_\sigma K^{s,n}(x_{\sigma(2)}-x_{\sigma(1)})
 \cdot
 K^{s,n}(x_{\sigma(3)}-x_{\sigma(1)}),
\end{equation*}
where the sum is taken over the six permutations of the set $\{1,2,3\}$ and ``$\cdot$'' denotes the scalar product. In \cite[Corollary 3.2]{cp} it was proved that 
\begin{equation}\label{permn}
 p_{s,n}(x_1,x_2,x_3)\approx\frac1{\max\{|x_1-x_2|,|x_1-x_3|,|x_2-x_3|\}^{2s}}.
\end{equation}

We are now interested in studying the singular integral operator $T^{s,n}=(T^{s,n}_i)_{i=1}^d$, defined formally as $$T^{s,n}_i(\mu)(x)=\int K^{n,s}_i(x-y)f(y)d\mu(y),$$  
for a real Radon measure $\mu$; as well as the truncated operators $$T^{s,n}_{\ve}\mu(x)=\int_{|y-x|>\varepsilon}K^{s,n}(y-x)d\mu(y),\;\;x\in\R^d,\;\varepsilon>0.$$
For a function $f\in L^1(\mu)$, we consider the operators $T^{s,n}_{\mu}(f):=T^{s,n}(fd\mu)$ and
$T^{s,n}_{\mu,\ve}(f):=T^{s,n}_\ve(fd\mu)$.
We say that $T^{s,n}_\mu$ is bounded in $L^2(\mu)$ if the truncated $T^{s,n}_{\mu,\ve}$ are bounded in $L^2(\mu)$ uniformly in $\ve$, and we set
$$\|T^{s,n}_\mu\|_{L^2(\mu)\to L^2(\mu)}=\sup_{\ve>0}\|T^{s,n}_{\mu,\ve}\|_{L^2(\mu)\to L^2(\mu)}.$$

As for the $s$-Riesz transform, using \eqref{permn} and following \cite{laura1} one can deduce property \eqref{property} for the operators $T^{n,s}$. And hence, 
by analogous arguments as in the proof of Theorem \ref{maintheorem} one can prove:

\begin{theorem}
 There are no non-trivial reflectionless measures for $T^{s,n}$ if $0<s<1$.
\end{theorem}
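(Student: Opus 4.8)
The plan is to repeat, essentially verbatim, the scheme of Sections~2--4 used to prove Theorem~\ref{maintheorem}, replacing the Riesz kernel $K^s$ by $K^{s,n}$ everywhere. The point is that only three features of $K^s$ are actually used in those sections: first, antisymmetry, which holds for $K^{s,n}$ precisely because $n$ is odd, so that $K^{s,n}(-x)=-K^{s,n}(x)$; second, that $K^{s,n}$ is a vector-valued Calder\'on--Zygmund kernel of homogeneity $-s$, i.e.\ $|K^{s,n}(x)|\lesssim|x|^{-s}$ and $|\nabla K^{s,n}(x)|\lesssim|x|^{-s-1}$, which is immediate since each component $x_i^n/|x|^{n+s}$ is smooth and $(-s)$-homogeneous away from the origin; and third, the permutation estimate \eqref{permn} from \cite[Corollary 3.2]{cp}, which is the exact analogue of the bound for $p_s$ used in \cite{laura1}. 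All the implicit constants will then depend on $n$ as well as on $s$ and $d$, which is harmless since $n$ is fixed.

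First I would record the consequences of $L^2(\mu)$-boundedness. If $\mu$ has no point masses and $T^{s,n}_\mu$ is bounded in $L^2(\mu)$, then $\mu$ has $s$-growth by the general Calder\'on--Zygmund argument of \cite[Proposition~1.4]{David-lnm}, valid for any antisymmetric CZ kernel of homogeneity $-s$. Next, the symmetrization identity — a purely algebraic identity for any antisymmetric kernel, cf.\ \cite[Section~2]{MV} — together with \eqref{permn} gives, as in \cite[Section 4.4]{laura1},
$$\int |T^{s,n}_\ve\mu(x)|^2\,d\mu(x)=\frac13\iiint_{\min_{i\neq j}|x_i-x_j|>\ve} p_{s,n}(x_1,x_2,x_3)\,d\mu(x_1)\,d\mu(x_2)\,d\mu(x_3)+O(\|\mu\|),$$
so that, exactly as in \cite[Theorem~4.4]{laura1}, $\Theta^{s,*}(x,\mu)=0$ for $\mu$-a.e.\ $x$ and property \eqref{property} holds for $T^{s,n}$. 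By \cite{Mattila-Verdera} (see also \cite[Chapter~8]{Tolsa-llibre}) the principal values $\lim_{\ve\to0}T^{s,n}_{\mu,\ve}f(x)$ then exist $\mu$-a.e.\ for $f\in L^2(\mu)$ and coincide with the weak limit $T^{s,n}_\mu f$. In particular, if $\mu$ is reflectionless for $T^{s,n}$ then the analogues of \eqref{eqjka1}, \eqref{eqjka2} and \eqref{eqjka3} hold.

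With these in hand, each statement of Sections~2--4 transfers with $R^s_\mu$ replaced by $T^{s,n}_\mu$ and $p_\mu(x,E,F)$ replaced by $p^{s,n}_\mu(x,E,F):=\iint_{E\times F}p_{s,n}(x,y,z)\,d\mu(y)\,d\mu(z)$, using the same proofs. Concretely: the analogue of Lemma~\ref{thin-estimates} uses only $|K^{s,n}(x)|\lesssim|x|^{-s}$ and the thin-boundary property; the analogue of Lemma~\ref{lemanou} is the $\ve\to0$ limit of the symmetrization identity above, whose error terms $A_\ve,B_\ve$ are controlled by $s$-growth and the size bound alone; the variational argument in the Main Lemma uses only antisymmetry, the reflectionless property, the bound $|f_0(w)|\lesssim{\mathcal P}(B)$ (whose tail part $h_2$ needs $|\nabla K^{s,n}(x)|\lesssim|x|^{-s-1}$), and the thin-boundary estimates; and the analogue of Lemma~\ref{compdens} combines the Main Lemma with the lower bound $p^{s,n}_\mu(x,B',B')\gtrsim\iint_{B'\times B'}\max\{|x-y|,|x-z|,|y-z|\}^{-2s}\,d\mu(y)\,d\mu(z)$ coming from \eqref{permn}. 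The proof of the theorem itself is then identical: choose $B_0$ of almost maximal density, build the tower $\widetilde B_j=2^{k_0+jm}B_0$ with $\mu(\widetilde B_j)\gtrsim\mu(\widetilde B_{j-1})$ and $\widetilde B_N$ of thin boundary, apply the Main Lemma to get $p^{s,n}_\mu(x,\widetilde B_N,\widetilde B_N)\lesssim{\mathcal P}(\widetilde B_N)^2\approx\theta_\mu^s(B_0)^2$, while \eqref{permn} forces $p^{s,n}_\mu(x,\widetilde B_N,\widetilde B_N)\gtrsim\sum_{j=1}^N\theta_\mu^s(\widetilde B_j)^2\approx N\,\theta_\mu^s(B_0)^2\to\infty$, a contradiction.

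The main obstacle — really the only point requiring care — is bookkeeping: checking that the two external inputs imported from the scalar Riesz setting, namely the growth lemma \cite[Proposition~1.4]{David-lnm} and the $\mu$-a.e.\ existence of principal values \cite{Mattila-Verdera}, are indeed available (or adapt with no change) for vector-valued antisymmetric Calder\'on--Zygmund operators of homogeneity $-s$, and that the estimate \eqref{permn} together with the CZ bounds for $K^{s,n}$ is used only through its comparison to $\max\{|x_1-x_2|,|x_1-x_3|,|x_2-x_3|\}^{-2s}$. No new analytic idea beyond what already appears in Sections~2--4 is needed.
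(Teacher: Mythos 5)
Your proposal is correct and coincides with the paper's own argument: the paper proves this theorem by observing that, given the permutation estimate \eqref{permn} from \cite[Corollary 3.2]{cp}, property \eqref{property} follows for $T^{s,n}$ exactly as in \cite{laura1}, and that Sections~2--4 then carry over verbatim with $R^s$ replaced by $T^{s,n}$. Your write-up simply makes explicit the routine kernel checks (antisymmetry since $n$ is odd, the two-sided bound $|K^{s,n}(x)|\approx|x|^{-s}$ and the gradient bound $|\nabla K^{s,n}(x)|\lesssim|x|^{-s-1}$) and the applicability of the imported results from \cite{David-lnm} and \cite{Mattila-Verdera}, which the paper leaves implicit.
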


By saying a measure $\mu$ is reflectionless for $T^{s,n}$ we mean that $\mu$ has no point masses, $T^{s,n}_{\mu}$ is bounded in $L^2(\mu)$, and 
\eqref{reflectionless} holds replacing $R^s$ by $T^{s,n}$. 

\vvv



\begin{thebibliography}{MTV2}


\bibitem[CMPT]{cmpt}V.\ Chousionis, J. Mateu, L.\ Prat and X.\ Tolsa, {\em Calder\'on Zygmund kernels and rectifiability in the plane}  Adv. Math. 231 (2012), no. 1, 535-568.

\bibitem[CP]{cp} V.\ Chousionis and L.\ Prat, {\em Some Calder\'on Zygmund kernels and their relations to Wolff capacities and rectifiability. } Preprint (2014) 

\bibitem[CPT]{cpt} V.\ Chousionis, L.\ Prat and X.\ Tolsa, {\em 
Square functions of fractional homogeneity and Wolff potentials.} Preprint (2014).

\bibitem[Da]{David-lnm} G. David,
\emph{Wavelets and singular integrals on curves and surfaces},
Lecture Notes in Mathematics, 1465. Springer-Verlag, Berlin, (1991).






\bibitem[ENV]{ENV} V. Eiderman, F. Nazarov and A. Volberg, {\em The $s$-Riesz transform of an $s$-dimensional
measure in $\R^2$ is unbounded for $1<s<2$}, J. Anal. Math. 122(1) (2014), 1--23.

\bibitem[JN]{jn} B. Jaye and F. Nazarov, {\em Reflectionless measures for Calder—n-Zygmund operators.}
Preprint (2013).

\bibitem[JNV]{JNV} B. Jaye, F. Nazarov and A. Volberg, {\em The fractional Riesz transform and an exponential potential}, St. Petersburg Math. J. Vol. 24 (2013), no. 6, 
903--938.





\bibitem[L\'e]{davidleger} J.C. L\'eger, {\em Menger curvature and rectifiability,} Ann. of Math. 149 (1999), 831-869.

\bibitem[Ma]{Mattila-llibre} P. Mattila, {\em Geometry of sets and measures in
Euclidean spaces,} Cambridge Stud. Adv. Math. 44, Cambridge Univ.
Press, Cambridge, 1995.

\bibitem[MaV]{Mattila-Verdera} P.\ Mattila and J.\ Verdera, {\em Convergence of singular integrals with general
measures,} J. European Math. Soc. 11, 2009, 257--271.


\bibitem[MPV]{mpv} J. Mateu, L. Prat and J. Verdera,
{\em The capacity associated to signed Riesz kernels, and Wolff potentials},
J. Reine Angew. Math. { 578} (2005), 201--223.

\bibitem[Me]{melnikov} M. S. Melnikov, {\em Analytic capacity: discrete approach and curvature of measures}, Sb. Math. {186} (1995), no. 2, 469-491.

\bibitem[MeV]{MV} M.S. Melnikov and J. Verdera, {\em A geometric proof of the
$L^2$ boundedness of the Cauchy integral on Lipschitz graphs.}
Internat. Math. Res. Notices (1995), 325--331.



\bibitem[NToV]{ntov} F. Nazarov, X. Tolsa and A. Volberg,  {\em On the uniform rectifiability of AD-regular measures with bounded Riesz transform operator: the case of codimension 1.} 
Acta Math. 213(2) (2014), 237--321.





\bibitem[Pr]{laura1} L.\ Prat, {\em Potential theory of signed Riesz kernels: capacity and Hausdorff measure.} Int. Math. Res. Not., (19) (2004), 937--981.

\bibitem[RdVT]{Aleix-Tolsa} A. Ruiz de Villa and X. Tolsa,
{\em Non existence of principal values of signed Riesz transforms of non integer dimension,} Indiana Univ. Math. J. 59:1 (2010), 115--130.




\bibitem[To1]{Tolsa-sem} X. Tolsa, {\em Painlev\'{e}'s problem and the
semiadditivity of analytic capacity}, Acta Math. 190:1 (2003), 105--149.


 
\bibitem[To2]{Tolsa-llibre}
X.~Tolsa,
{\em Analytic capacity, the {C}auchy transform, and non-homogeneous
  {C}alder\'on-{Z}ygmund theory}, volume 307 of {\em Progress in Mathematics}.
 Birkh\"auser Verlag, Basel, 2014.
 
 

\bibitem[Vi]{Vi}
M.~Vihtil{\"a}.
{\em The boundedness of {R}iesz {$s$}-transforms of measures in {$\R^n$}}.
Proc. Amer. Math. Soc., 124(12):3797--3804, 1996.

\bibitem[Vo]{Volberg} A. Volberg, {\em Calder\'on-Zygmund capacities and operators on nonhomogeneous spaces.}
CBMS Regional Conf. Ser. in Math. 100, Amer. Math. Soc., Providence, 2003.

\end{thebibliography}
\end{document}